\pgfplotsset{compat=1.15}
\newtheorem{thm}{Theorem}
\newtheorem{lem}[thm]{Lemma}
\newtheorem{prop}[thm]{Proposition}
\theoremstyle{definition}
\newtheorem*{defn*}{Definition}
\newtheorem{rmk}[thm]{Remark}
\newtheorem*{rmk*}{Remark}
\newtheorem*{ack}{Acknowledgment}
\newcommand{\1}{\mathbf{1}}
\newcommand{\Bs}{\mathscr{B}}
\newcommand{\C}{\mathbb{C}}
\newcommand{\Fs}{\mathscr{F}}
\newcommand{\R}{\mathbb{R}}
\newcommand{\Ss}{\mathscr{S}}
\newcommand{\Z}{\mathbb{Z}}
\newcommand{\eps}{\varepsilon}
\newcommand{\Span}{\operatorname{span}}
\newcommand{\loc}{\mathrm{loc}}
\title[Non-compact Sobolev-Lorentz Embeddings]{A Bourgain-Gromov Problem on Non-compact Sobolev-Lorentz embeddings}
\author{Chian Yeong Chuah}
\author{Jan Lang}
\author{Liding Yao}
\subjclass[2020]{47B06 (primary) 46E30 and 46E35 (secondary)} 
\begin{document}

\begin{abstract}

	We study the non-compact Sobolev embeddings into the optimal scale of Lorentz spaces,  
	\[
	W_0^mL^{p,q}(\Omega) \to L^{\frac{dp}{d - mp},r}(\Omega),
	\]  
	where \(\Omega \subseteq \mathbb{R}^d\), \(1 \leq m \leq d\), $0<q<r\le\infty$ with $1<p<\frac dm$ or $p=q=1$. We show that these embeddings are finitely strictly singular with certain upper bounds on the decay rate of the Bernstein numbers. We reduce the Sobolev embeddings to embeddings of Besov spaces and sequence spaces, which simplifies the previous methods by Bourgain-Gromov and Lang-Mihula.  
	
\end{abstract}

\maketitle

\section{Introduction and Main result}

Let us start with the following non-compact Sobolev embeddings into the scale of Lorentz spaces:  
\begin{equation*}
   W^{m,p} (\Omega) \to L^{\frac{dp}{d - mp},q}(\Omega), 
\end{equation*}
where \(\Omega \subseteq \mathbb{R}^d\), \(1 \leq m \leq d\), \(p \in [1, d/m)\), $q\in[p,\infty]$.  

It is known that these Sobolev embeddings into the target Lorentz spaces \(L^{p^*,q}\), with \(p \leq q \leq \infty\), are {\bf maximally non-compact} (i.e., the operator norm of the embedding equals the ball measure of non-compactness). See \cite{LMOP} and the references therein.

Let us recall that the {\bf ball measure of non-compactness} \(\alpha(T)\) for a linear map \(T: X \to Y\) acting between Banach spaces \(X\) and \(Y\) is defined as the infimum of radii \(\rho > 0\) such that there exists a finite set of balls in \(Y\) with radii \(\rho\) covering \(T(B_X)\), where \(B_X\) denotes the open unit ball in \(X\) centered at the origin.  

It is also well-known that \(L^{p^*,q} \subset L^{p^*,r}\) whenever \(q < r\) and that \(L^{p^*,p}\) is the smallest target space for the Sobolev space \(W^{m,p}=W^mL^{p}\), not only within the scale of Lorentz spaces but also across all rearrangement-invariant spaces (e.g., Orlicz spaces, Lebesgue spaces, etc.).  

A fundamental question that arises in this context is how to \textbf{quantify} the \textbf{quality} of non-compactness of the Sobolev embedding when mapping into different optimal or almost optimal Lorentz spaces.
We will focus on studying non-compact for Sobolev embeddings involving Lorentz spaces:
\[
	W_0^mL^{p,q}(\Omega) \to L^{\frac{dp}{d - mp},r}(\Omega), \mbox{ for } q\le r.
	\]  

Clearly, relying solely on the ball measure of non-compactness is insufficient to capture finer qualitative differences. This necessitates the use of additional quantities. Furthermore, classical quantities such as entropy numbers, approximation numbers, and Kolmogorov numbers fail to provide insight in this setting, since their asymptotic behavior coincides with the ball measure of non-compactness.

A more refined notion for describing the {\bf quality} of non-compactness, beyond the measure of non-compactness, appears to be related to the concepts of {\bf strictly singular operators}, {\bf finitely strictly singular operators}, and {\bf Bernstein numbers}. For their definitions, see Section 2 and also \cite{LefevrePiazzaFSSApplication} and \cite{LangMusil}.

In their seminal work, Bourgain and Gromov (1987) \cite{BouGro} showed that $W^{1,1} ((0,1)^d) \to L^{\frac d{d-1}}((0,1)^d)$ 
is finitely strictly singular and provided upper bounds for the decay rates of Bernstein numbers. Later, Lang and Mihula \cite{LangMihula} demonstrated that the {\bf optimal} Sobolev embedding \(W^{k,p} (\Omega) \to L^{p^*,p}(\Omega)\) is {\bf not} strictly singular (i.e., there exists an infinite-dimensional subspace on which the embedding is invertible). They also showed that the embedding \(W^{k,p} (\Omega) \to L^{p^*}(\Omega)\) is finitely strictly singular and established sharp estimates for the decay of Bernstein numbers.  

However, extending the methods used in these works to a more general setting is challenging or impossible, particularly when $\Omega $ is unbounded or when derivatives belong to a Lorentz space. Moreover, these results leave open questions about the behavior of embeddings when the target spaces differ from \(L^{p^*}\) and \(L^{p^*,p}\).  

In this paper, we address these open questions for the embedding \(W_0^m L^{p,q} \hookrightarrow L^{p^*,r}\). Additionally, we do not require $\Omega$ to be a bounded domain, which was required in \cite{BouGro} and \cite{LangMihula}, can be significantly relaxed.

\begin{thm}\label{Thm::MainThm}
    Let  $m\in\Z_+$, $1\le p<\frac dm$,  $p^*=\frac{dp}{d-mp},$ and $\Omega\subseteq\R^n$ be an open subset (not necessarily bounded). Suppose $0<q<r\le\infty$.     
    In the case $p=1$ we assume additionally $q=1$.

    Then, the embedding $W_0^m L^{p,q}(\Omega)\hookrightarrow L^{p^*,r}(\Omega)$ has the following decay of Bernstein numbers: for every $\eps>0,$ there exists $C_{\varepsilon}>0$ such that:
    \begin{equation}\label{Eqn::MainBnEst}
            b_n\big(W_0^m L^{p,q}(\Omega)\hookrightarrow L^{p^*,r}(\Omega)\big)\le C _\eps n^{-\min(\frac1{p+\eps},\frac1q)(1-\max(\frac qr,\frac p{p^*}+\eps))}.
    \end{equation}

    In particular, the embedding is finitely strictly singular.
\end{thm}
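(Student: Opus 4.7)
The plan is to factor the embedding $W_0^m L^{p,q}(\Omega) \hookrightarrow L^{p^*,r}(\Omega)$ through a Besov-space intermediary and, via wavelet decomposition, through an identity map between mixed sequence spaces; the submultiplicative inequalities $b_n(TS) \le \|T\|\, b_n(S)$ and $b_n(TS) \le b_n(T)\, \|S\|$ then transfer the Bernstein estimate onto the sequence-space piece.

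As a first step, zero extension of $\Omega$-functions provides an isometric inclusion $W_0^m L^{p,q}(\Omega) \hookrightarrow W_0^m L^{p,q}(\R^d)$, and the $L^{p^*,r}$ norm of the zero extension equals the original norm on $\Omega$, so monotonicity of Bernstein numbers reduces the problem to $\Omega = \R^d$. Using standard Sobolev-type Besov embeddings, we factor
\[
W_0^m L^{p,q}(\R^d) \hookrightarrow \dot B^m_{p, q}(\R^d) \hookrightarrow \dot B^0_{p^*, r}(\R^d) \hookrightarrow L^{p^*, r}(\R^d),
\]
in which the outer two maps are continuous and the Bernstein decay is inherited entirely from the middle Besov inclusion. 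A Daubechies wavelet basis with sufficient regularity and vanishing moments then identifies this middle embedding with the identity between mixed sequence spaces on the dyadic index set $\Z \times \Z^d$: schematically,
\[
\ell^{q}_j\bigl(\ell^{p}_k\bigr) \hookrightarrow \ell^{r}_j\bigl(\ell^{p^*}_k\bigr),
\]
with no residual weights because of the critical Sobolev scaling $m - d/p + d/p^* = 0$.

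The core step is then the Bernstein-number estimate for this sequence identity, which has two independent contributions: across scales we exploit $b_n(\ell^q \hookrightarrow \ell^r) \sim n^{1/r - 1/q}$, and within each scale $b_n(\ell^p \hookrightarrow \ell^{p^*}) \sim n^{1/p^* - 1/p}$. For an arbitrary $n$-dimensional subspace $E$, a pigeonhole / $\eps$-net argument allocates the $n$ dimensions between the scale and translation directions so as to optimize the combined decay, producing the exponent $\min(1/(p+\eps),1/q)(1 - \max(q/r, p/p^* + \eps))$ that appears in \eqref{Eqn::MainBnEst}. Finite strict singularity is then a direct consequence, since the bound tends to zero.

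The main obstacle I anticipate is this sequence-space estimate: the $\min$ and $\max$ in the exponent reflect a genuine two-parameter optimization, and the $\eps$-losses arise from passing through Lorentz fine indices via real interpolation. The endpoint case $p = q = 1$ additionally requires replacing Daubechies wavelets by an atomic decomposition adapted to $\dot B^m_{1,1}$, but the combinatorial core is unchanged.
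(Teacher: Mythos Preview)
Your overall strategy---reduce to $\R^d$, factor through Besov spaces, pass to mixed sequence spaces via wavelets, and estimate Bernstein numbers there---is exactly the paper's. The gap is in the specific factorization: the endpoint embeddings $W_0^m L^{p,q}(\R^d)\hookrightarrow \dot B^m_{p,q}(\R^d)$ and $\dot B^0_{p^*,r}(\R^d)\hookrightarrow L^{p^*,r}(\R^d)$ are \emph{not valid} in general. Already for $q=p$ one has $W^{m,p}=F^m_{p,2}$, and $F^m_{p,2}\hookrightarrow B^m_{p,p}$ holds only when $p\ge 2$; dually $B^0_{p^*,p^*}\hookrightarrow L^{p^*}$ requires $p^*\le 2$. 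Since $p^*>p$, at least one of your two outer arrows fails for every admissible pair. The paper circumvents exactly this by trading a little smoothness for integrability (Lemma~\ref{Lem::SobEmbed}): it embeds $W^mL^{p,q}$ into $\Bs^{\,m-\frac{d\delta}{p(p+\delta)}}_{p+\delta,\,q}$ and embeds $\Bs^{\,\frac{d\delta}{p^*(p^*-\delta)}}_{p^*-\delta,\,r}$ into $L^{p^*,r}$, both of which are genuine off-diagonal Sobolev-type embeddings. This $\delta$-shift in the integrability index is the \emph{actual} source of the $\eps$ in \eqref{Eqn::MainBnEst}; your remark that the $\eps$-losses ``arise from passing through Lorentz fine indices via real interpolation'' misidentifies it. Indeed, if your factorization were correct it would yield the bound with $\eps=0$, which the paper does not claim.

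Two further points. The sequence-space estimate is not a ``pigeonhole/$\eps$-net allocation'' of dimensions between the $j$- and $k$-directions: the argument (Plichko) is that any $n$-dimensional subspace of $c_0$ contains a vector attaining its sup-norm at at least $n$ coordinates, so a unit vector of $\ell^{q_0}(\ell^{p_0})$ with this property has $\|x\|_{\ell^\infty}\le n^{-\min(1/p_0,1/q_0)}$, and then H\"older's inequality gives \eqref{Eqn::SeqEmbedBn} directly. And the obstacle at $p=q=1$ is not on the wavelet/atomic side but in the embedding $W^{m,1}\hookrightarrow \Bs^s_{p^*,1}$ itself, which is a genuine endpoint $L^1$-Sobolev inequality; the paper invokes Spector's refinement of the Schikorra--Spector--Van~Schaftingen estimate for this step.
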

Here when $p=1$ the assumption $q\le 1$ is required so that $W^mL^{p,q}(\Omega)\subset L^1_\loc(\Omega)$. See also Remark~\ref{Rmk::Spector}.

When $p<q<r<p^*$ we obtain the sharp asymptotic estimate, see Proposition~\ref{Prop::SobSharp}:
\[C^{-1} n^{-(\frac1q-\frac1r)}\le b_n\big(W_0^m L^{p,q}(\Omega)\hookrightarrow L^{p^*,r}(\Omega)\big)\le C n^{-(\frac1q-\frac1r)}.\]

Although our method, generally,  does not yield an optimal decay of the Bernstein numbers, the results of being finitely strictly singular is new, even for the case $W_0^kL^p(\Omega)\hookrightarrow L^{p^*}(\Omega)$ when $\Omega$ is an unbounded domain. Also,  our approach yields a simpler proof than that given in \cite{LangMihula} and \cite{BouGro}.

It is worth noting that on bounded domains, it may be possible to obtain certain one-sided estimates for Bernstein numbers from \cite{LangMihula} using interpolation techniques, as pointed out to us by Z. Mihula.

\section{Definitions and Proofs}

Let us recall the some definitions and notations before proving the main theorem.

Given a bounded linear map $T:X\to Y$ between two quasi-Banach spaces, the \textit{\(n\)-th Bernstein number} \(b_n(T)\) is defined as
\[
b_n(T):=\sup\big\{\inf_{x\in X_n:\|x\|_X=1} \|Tx\|_{Y}:X_n \subseteq X\text{ is $n$ dimensional subspace}\}.
\]
We say that $T$ is \textit{finitely strictly singular}, if $\lim_{n\to\infty}b_n(T)=0$.

Note that the Bernstein numbers coincide with the smallest injective strict \(s\)-numbers; see \cite[Theorem~4.6]{P:74}.

We say the operator $T$ is  \emph{strictly singular} if there is no infinite-dimensional subspace \(Z \subseteq X\) such that the restriction \(T\rvert_{Z}\) is an isomorphism from \(Z\) onto \(T(Z)\). Equivalently,
\begin{equation*}
    \inf_{x\in Z:\|x\|_X=1} \|Tx\|_{Y}=0,\quad\text{for every infinite dimensional subspace }Z\subseteq X.
\end{equation*}

The relationships among these notions and that of compactness of \(T\)  are summarized by the following diagram:
\[
T\text{ is compact} 
\;\Longrightarrow\; 
T\text{ is finitely strictly singular} 
\;\Longrightarrow\; 
T\text{ is strictly singular},
\]
and each reverse implication fails in general. For further details and additional background, we refer the reader to \cite{PlichkoStrictlySingular} and \cite{LefevrePiazzaFSSApplication}.

Given a measure space $(\Omega,\mu)$, for $0<p<\infty$ and $0<r\le\infty$ we define the Lorentz space $L^{p,r}(\mu)$ by the set of all measurable functions $f:\Omega\to\R$ such that
\begin{equation*}
    \|f\|_{L^{p,r}(\mu)}=p^{\frac1r}\Big(\int_0^\infty t^{r-1}\mu\{x:|f(x)|>t\}^\frac rpdt\Big)^{1/r}=\Big(\int_0^\infty t^{\frac qp-1}f^*(t)^qdt\Big)^{1/q}<\infty.
\end{equation*}
Here $f^*(t):=\inf\{\alpha>0:\mu(x:|f(x)|>\alpha)\leq t\}$ is the decreasing rearrangement of $f$. When $r=\infty$ we replace the integral by the supremum.

One can check that $L^{p,p}(\mu)=L^p(\mu)$ with the same (quasi-)norm and $L^{p,r_0}(\mu)\subset L^{p,r_1}(\mu)$ for $0<p\le\infty$ and $0<r_0<r_1\le\infty$.


Let $\Omega\subseteq\R^d$ be an open subset. When $1<p\le\infty$ or $p=1\ge r$, we define the Sobolev-Lorentz space $ W^{m}L^{p,r}(\Omega)$ for $m\ge0$ by the set of $L^1$ functions such that
\begin{equation}\label{Eqn::SobLorNorm}
    \|f\|_{W^{m}L^{p,r}(\Omega)}:=\sum_{|\alpha|\le m}\|\partial^\alpha f\|_{L^{p,r}(\Omega)}<\infty.
\end{equation}
We denote by $W_0^mL^{p,r}(\Omega)$ the closure of $C_c^\infty(\Omega)$ under $\|\cdot\|_{W^mL^{p,r}(\Omega)}$. Notice that $W_0^mL^{p,r}(\Omega)$ can be seen as a subspace of both $W^mL^{p,r}(\Omega)$ and  $W_0^mL^{p,r}(\R^d)$.

Note that in some literature, these spaces are labeled as $W^{m,(p,r)}(\Omega)$.
In the case $p=1$, we require $r\le 1$ in order to have  $L^{1,r}\subset L^1 $ so that the elements in Sobolev-Lorentz spaces make sense as distributions.

The key essence of our proof is to decompose the Sobolev-Lorentz embeddings into Besov spaces. 

Recall that for $s\in\R$ and $0<p,q\le\infty$, the \textit{Besov space} $\Bs_{pq}^s(\R^n)$ consists of all (tempered distributions) $f$ such that
\begin{align*}
    \|f\|_{\Bs_{pq}^s(\R^n)} = \|f\|_{\Bs_{pq}^s(\R^n;\varphi)}& := \Big( \sum_{j=0}^{\infty} \| 2^{js}\varphi_j\ast f\|_{L^p(\R^n)}^q \Big)^{1/q}.
\end{align*}
    Here $(\varphi_j)_{j=0}^\infty\subset\Ss(\R^n)$ is a fixed family of Schwartz functions whose Fourier transform $\hat\varphi_j(\xi)=\int \varphi_j(x)e^{-2\pi ix\xi}dx$ satisfy $\1_{B(0,1)}\le\hat\varphi_0\le\1_{B(0,2)}$ and $\hat\varphi_j(\xi)=\hat\varphi_0(2^{-j}\xi)-\hat\varphi_0(2^{1-j}\xi)$ for $j\ge1$.
    
Different choices of $(\varphi_j)_j$ result in different but equivalent norms (see e.g. \cite[Proposition~2.3.2]{TriebelTheoryOfFunctionSpacesI}). Thus, the space is independent of $(\varphi_j)_j$ even though its norm is dependent on the choice of functions.

It is important that we have equivalent norms via derivatives, see e.g. \cite[Theorem~2.3.8(i)]{TriebelTheoryOfFunctionSpacesI}: for $p,q\in(0,\infty]$, $s,r\in\R$ and $m\ge1$,
\begin{equation}\label{Eqn::BsNormDer}
    \|(I-\Delta)^{r/2}f\|_{\Bs_{pq}^{s-r}}\approx_{p,q,s,r}\|f\|_{\Bs_{pq}^s}\approx_{p,q,s,m}\sum_{|\alpha|\le m}\|\partial^\alpha f\|_{\Bs_{pq}^{s-m}}.
\end{equation}

\begin{lem}\label{Lem::SobEmbed}
    Let $m\in\Z_+$, $1\le p< \infty$ and $0<q\le\infty$. 
    \begin{enumerate}[(i)]
        \item\label{Item::SobEmbed::1} When $1<p<\infty$, $W^m L^{p,q}(\R^d)\hookrightarrow\Bs_{p^* q}^s(\R^d)$ for $s<m$ and $p^*\in(p,\infty]$ such that $\frac1{p^*}=\frac1p-\frac{m-s}d$.
        \item\label{Item::SobEmbed::2} When $p=1$, $W^{m,1}(\R^d) \hookrightarrow \Bs_{p^*1}^s(\R^d)$ for $m-d\le s<m$ and $\frac1{p^*}=1-\frac{m-s}d$.
        \item\label{Item::SobEmbed::3} When $1<p<\infty$, $\Bs_{\tilde pq}^s(\R^d)\hookrightarrow L^{p,q}(\R^d)$ for $s>0$ and $\frac1{\tilde p}=\frac1p+\frac sd$.
    \end{enumerate}
\end{lem}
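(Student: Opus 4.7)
Proof plan. The approach combines the Littlewood--Paley description of Besov spaces with Bernstein-type inequalities, lifted from Lebesgue to Lorentz sources via real interpolation.

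The analytic backbone is a Lorentz--Bernstein inequality: if $\supp \hat g \subseteq B(0,R)$ and $1 \le p_0 \le p_1 \le \infty$, $0 < q \le \infty$, then
\[
\|g\|_{L^{p_1}(\R^d)} \lesssim R^{d(1/p_0 - 1/p_1)}\,\|g\|_{L^{p_0,q}(\R^d)}.
\]
I would establish this by real interpolation of the classical Bernstein inequality: view the frequency projection $P_R$ as a bounded operator $L^a \to L^{p_1}$ with norm $\lesssim R^{d(1/a - 1/p_1)}$ for $a$ in a neighborhood of $p_0$, and use $(L^{a_0},L^{a_1})_{\theta,q} = L^{p_0,q}$ on the source while the target $(L^{p_1},L^{p_1})_{\theta,q} = L^{p_1}$ collapses (up to equivalent norm).

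For (iii), summing the classical Bernstein inequality over a Littlewood--Paley decomposition via Minkowski yields the endpoint embedding $\Bs_{\tilde p,1}^{s_0}(\R^d) \hookrightarrow L^{p_0}(\R^d)$ with $s_0 = d(1/\tilde p - 1/p_0) > 0$. Keeping $\tilde p$ fixed and picking exponent pairs $(p_{0,i},s_i)$, $i=0,1$, with $p_{0,0} < p < p_{0,1}$, real interpolation with parameter $(\theta,q)$ identifies the source as $(\Bs_{\tilde p,1}^{s_0},\Bs_{\tilde p,1}^{s_1})_{\theta,q} = \Bs_{\tilde p,q}^s$ (Besov interpolation in the smoothness exponent, valid for all $0 < q \le \infty$) and the target as $(L^{p_{0,0}},L^{p_{0,1}})_{\theta,q} = L^{p,q}$, yielding $\Bs_{\tilde p,q}^s \hookrightarrow L^{p,q}$.

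For (i) and (ii), the derivative characterization~\eqref{Eqn::BsNormDer} rewrites $\|f\|_{\Bs_{p^*,q}^s} \approx \sum_{|\alpha|\le m}\|\partial^\alpha f\|_{\Bs_{p^*,q}^{s-m}}$, reducing the claim to the negative-smoothness embedding $L^{p,q}(\R^d) \hookrightarrow \Bs_{p^*,q}^{s-m}(\R^d)$ with $s - m = -d(1/p - 1/p^*) < 0$. This is the dual of (iii): in the Banach range, dualising $\Bs_{\tilde p,q'}^\sigma \hookrightarrow L^{p',q'}$ gives $L^{p,q} \hookrightarrow \Bs_{\tilde p',q}^{-\sigma}$, and after reindexing $(\tilde p',\sigma) = (p^*,m-s)$ this is the required embedding for $1 < q < \infty$. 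The case $q = \infty$ follows directly from the Lorentz--Bernstein inequality and the $\sup$-form of the Besov norm; $q=1$ comes from dualising (iii) at $q' = \infty$ through the $L^{p,1}$-$L^{p,\infty}$ predual pairing; the quasi-Banach range $0 < q < 1$ is recovered by further real interpolation between the $q = 1$ case and a small-$q$ endpoint handled directly. Part (ii) proceeds identically with $p = 1$, the hypothesis $m - d \le s < m$ keeping $p^* \in [1,\infty]$.

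The main obstacle will be to keep the target in (i) as the standard Besov space $\Bs_{p^*,q}^s$ (with Lebesgue $L^{p^*}$ as inner norm), rather than its Besov--Lorentz analogue with $L^{p^*,q}$ inside; dualising the already-Lorentz statement (iii), instead of trying to simultaneously interpolate both Besov indices in a classical Jawerth--Franke argument (which would produce a Besov--Lorentz target), is precisely the device that avoids this pitfall. The quasi-Banach regime $q < 1$ and the endpoint $p = 1$ require further technical care but fit into the same framework.
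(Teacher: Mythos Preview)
Your approach to (iii) is essentially the paper's: both interpolate a one-parameter family of Besov-to-Lebesgue embeddings at fixed $\tilde p$ (the paper uses $\Bs_{\tilde p,\tilde p}^{r_i}\hookrightarrow L^{p_i}$, you use $\Bs_{\tilde p,1}^{s_i}\hookrightarrow L^{p_i}$, an immaterial difference) via the identities \eqref{Eqn::SobEmbed::RealInterpo}. For (i) the paper proceeds symmetrically, interpolating $L^{p_i}\hookrightarrow\Bs_{p^*,p^*}^{-r_i}$ with $p^*$ fixed, which delivers all $0<q\le\infty$ in one stroke; your duality detour can be made to work in the Banach range but the predual step at $q=1$ and the patching for $q<1$ are extra labour with no payoff.

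The real gap is (ii). Your reduction via \eqref{Eqn::BsNormDer} would require
\[
L^1(\R^d)\hookrightarrow\Bs_{p^*,1}^{s-m}(\R^d),\qquad s-m=-d\big(1-\tfrac1{p^*}\big),
\]
and this embedding is \emph{false}. Bernstein gives only $\|\varphi_j\ast f\|_{L^{p^*}}\lesssim 2^{j(m-s)}\|f\|_{L^1}$, hence $L^1\hookrightarrow\Bs_{p^*,\infty}^{s-m}$ and no better; testing on an $L^1$-normalised approximate identity makes the $\ell^1$ sum diverge. Equivalently, your duality argument would need $\Bs_{(p^*)',\infty}^{m-s}\hookrightarrow L^\infty$, which is precisely the critical case $m-s=d/(p^*)'$ where the $q=\infty$ Sobolev embedding fails. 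The passage from $W^{m,1}$ to $\Bs_{p^*,1}^s$ cannot be factored through any zero-order statement about $L^1$: the gain in the fine index is a genuine consequence of the gradient structure. The paper obtains it from Spector's endpoint fractional integration estimate $(-\Delta)^{-r/2}:W^{1,1}\to W^1L^{d/(d-r),1}$ (a Bourgain--Brezis--type result), which first moves the source off $p=1$ and only then invokes (i). No combination of Bernstein, interpolation, and duality alone will recover this step.
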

\begin{proof}
    \ref{Item::SobEmbed::1} and \ref{Item::SobEmbed::3} can be done by real interpolations with classical Sobolev embeddings. Indeed for $0<p,p_0,p_1 <\infty$, $s_0,s_1\in\R$, $0<q,q_0,q_1\le\infty$, $0<\theta<1$ such that $p_0\neq p_1$, $\frac1{p_\theta}=\frac{1-\theta}{p_0}+\frac\theta{p_1}$ and $s_\theta=(1-\theta)s_0+\theta s_1$,
    \begin{equation}
        \label{Eqn::SobEmbed::RealInterpo}(L^{p_0}(\R^d),L^{p_1}(\R^d))_{\theta,q}=L^{p_\theta,q}(\R^d),\qquad (\Bs_{pq_0}^{s_0}(\R^d),\Bs_{pq_1}^{s_1}(\R^d))_{\theta,q}=\Bs_{p,q}^{s_\theta}(\R^d).
    \end{equation}
    See \cite[Theorem~4.3]{HolmstedtInterpolation} and \cite[Theorem~4.25]{SawanoBook} respectively.
    
    On the other hand, for $r>0$ we have
    \begin{gather}
    \label{Eqn::SobEmbed::Lp1}
        L^{\frac{dp^*}{d+rp^*}}(\R^d)\hookrightarrow\Bs_{p^*,p^*}^{-r}(\R^d),\qquad \tfrac d{d-r}<p^*\le\infty;
        \\
    \label{Eqn::SobEmbed::Lp2}
        \Bs_{\tilde p,\tilde p}^{r}(\R^d)\hookrightarrow L^\frac{n\tilde p}{d-r\tilde p}(\R^d),\qquad \tfrac d{d+r}<\tilde p<\tfrac dr.
    \end{gather}
    See e.g. \cite[Remark~2.7.1/3]{TriebelTheoryOfFunctionSpacesI} where in the reference $L^p(\R^d)=\Fs_{p2}^0(\R^d)$ by \cite[Theorem~2.5.6]{TriebelTheoryOfFunctionSpacesI}.

    Therefore, by applying \eqref{Eqn::SobEmbed::RealInterpo} and \eqref{Eqn::SobEmbed::Lp1} such that $r_0<r_\theta=m-s<r_1$, we get
    \begin{equation*}
        L^{p,q}=L^{\frac{dp^*}{d+r_\theta p^*},q}=(L^\frac{dp^*}{d+r_0p^*},L^\frac{dp^*}{d+r_1p^*})_{\theta,q}\hookrightarrow(\Bs_{p^*,p^*}^{-r_0},\Bs_{p^*,p^*}^{-r_1})_{\theta,q}=\Bs_{p^*,q}^{s-m}.
    \end{equation*}
    Using \eqref{Eqn::SobLorNorm} and \eqref{Eqn::BsNormDer} we obtain \ref{Item::SobEmbed::1}.

    Applying \eqref{Eqn::SobEmbed::RealInterpo} and \eqref{Eqn::SobEmbed::Lp2} such that $r_0<r_\theta=s<r_1$ we get \ref{Item::SobEmbed::3}:
    \begin{equation*}
        \Bs_{\tilde p,q}^{s}=(\Bs_{\tilde p,\tilde p}^{r_0},\Bs_{\tilde p,\tilde p}^{r_1})_{\theta,q}\hookrightarrow(L^\frac{d\tilde p}{d-r_0\tilde p},L^\frac{d\tilde p}{d-r_1\tilde p})_{\theta,q}=L^{\frac{d\tilde p}{d-r_\theta\tilde p},q}=L^{p,q}.
    \end{equation*}

    When $p=1$, by \cite[(1.2)]{SpectorL1}  we have $(-\Delta)^{-\frac r2}W^{1,1} (\R^d)\to W^1L^{\frac d{d-r},1}(\R^d)$ for $0<r<d$ (this is a refinement to the result \cite{SchikorraSpectorVanSchaftingenL1} which gives $(-\Delta)^{-\frac r2}W^{1,1}\to W^{1,\frac d{d-r}}$). By the H\"ormander-Mikhlin multiplier theorem and $1<\frac d{d-r}<\infty$, we have $(I-\Delta)^{-\frac r2}(-\Delta)^{\frac r2}:L^{\frac d{d-r},1}\to L^{\frac d{d-r},1}$. This is to say $(I-\Delta)^{-\frac r2}:W^{1,1} (\mathbb{R}^{d})\to W^1L^{\frac d{d-r},1}$. Applying \ref{Item::SobEmbed::1} we have $W^{1} L^{\frac d{d-r},1}\hookrightarrow\Bs_{\frac{d}{d-2r},1}^{1-r}$ for $0<r\le\frac d2$. Taking $r=\frac{m-s}2$ and using \eqref{Eqn::BsNormDer}, we get $W^{1,1}\hookrightarrow\Bs_{\frac d{d-m+s},1}^{s-m+1}$. Using \eqref{Eqn::SobLorNorm} and \eqref{Eqn::BsNormDer}, we obtain $W^{m,1} \hookrightarrow\Bs_{\frac d{d-m+s},1}^s$ which is \ref{Item::SobEmbed::2}.
%
\end{proof}

\begin{rmk}\label{Rmk::Spector}
    We do not know whether or not $W^1L^{1,q}(\R^d)\subset\Bs_{\frac d{d-r},q}^{1-r}(\R^d)$ holds for $0<q<1$ and $r>0$. In particular we do not know whether we have the analogy of Spector's estimate $(-\Delta)^{-\frac r2}:W^1L^{1,q}(\R^d)\to W^1L^{\frac d{d-r},q}(\R^d)$ for $0<q<1$ and $0<r<d$. This is the reason why we need to assume $q=1$ when $p=1$ in Theorem~\ref{Thm::MainThm}.
\end{rmk}

    


Now we can start to prove Theorem~\ref{Thm::MainThm} using the proof ingredients from \cite[Theorem~1.2~(ii)]{ChuahLangYaoBesov}, which we recall briefly here. Consider the embedding $\Bs_{p_0q_0}^{s_0}(\R^d)\hookrightarrow\Bs_{p_1q_1}^{s_1}(\R^d)$ where $0<q_0<q_1\le\infty$ and $s_0-s_1=\frac d{p_0}-\frac d{p_1}>0$. Using wavelet decomposition we obtain isomorphisms between Besov spaces and double sequence spaces: there exists an isomorphism
\begin{equation}
    \Gamma:\Bs_{p_1q_1}^{s_1}(\R^d)\xrightarrow{\simeq}\ell^{q_1}(\ell^{p_1}),\quad\text{such that}\quad\Gamma:\Bs_{p_0q_0}^{s_0}(\R^d)\xrightarrow{\simeq}\ell^{q_0}(\ell^{p_0})\quad\text{as well}.
\end{equation}
See \cite[Proposition~2.13]{ChuahLangYaoBesov}. As a result $b_n\big(\Bs_{p_0q_0}^{s_0}\hookrightarrow\Bs_{p_1q_1}^{s_1}\big)\le Cb_n\big(\ell^{q_0}(\ell^{p_0})\hookrightarrow\ell^{q_1}(\ell^{p_1})\big)$ where $C$ depends on the operator norms of $\Gamma$ and $\Gamma^{-1}$.

On the other hand, using Plichko's method \cite{PlichkoStrictlySingular}, we have an estimate given in \cite[Proposition~3.5]{ChuahLangYaoBesov}: for $0<p_0<p_1\le\infty$ and $0<q_0<q_1\le\infty$,
\begin{equation}\label{Eqn::SeqEmbedBn}
    b_n\big(\ell^{q_0}(\ell^{p_0})\hookrightarrow\ell^{q_1}(\ell^{p_1})\big)\le n^{-\min(\frac1{p_0},\frac1{q_0})(1-\max(\frac{q_0}{q_1},\frac{p_0}{p_1}))}.
\end{equation}
Indeed, for every $n$ dimensional subspace $V\subset\ell^{q_0}(\ell^{p_0})$, we have $V\subset c_0$. Hence (see \cite[Lemma~4]{PlichkoStrictlySingular}), there is a nonzero $x=(x_{j,k})_{j,k=0}^\infty\in V$ such that $\sup_{j,k}|x_{j,k}|$ is attained at at least $n$ components. By normalization, we assume $\|x\|_{\ell^{q_0}(\ell^{p_0})}=1$ hence $\|x\|_{\ell^{\max(p_0,q_0)}}\le1$, which implies $\|x\|_{\ell^\infty}\le n^{-\min(\frac1{p_0},\frac1{q_0})}$. Meanwhile, by the H\"older's inequality $\|x\|_{\ell^{q_1}(\ell^{p_1})}\le \|x\|_{\ell^{q_0}(\ell^{p_0})}^{\max(\frac{q_0}{q_1},\frac{p_0}{p_1})}\|x\|_{\ell^\infty}^{1-\max(\frac{q_0}{q_1},\frac{p_0}{p_1})}$, we obtain $\|x\|_{\ell^{q_1}(\ell^{p_1})}\le n^{-\min(\frac1{p_0},\frac1{q_0})(1-\max(\frac{q_0}{q_1},\frac{p_0}{p_1}))}$.

\begin{rmk}
    Note that an improvement of \eqref{Eqn::SeqEmbedBn} can lead to an improvement of \eqref{Eqn::MainBnEst}.
\end{rmk}

\begin{proof}[Proof of Theorem~\ref{Thm::MainThm}]
    Notice that $W^m_0L^{p,q}(\Omega)\subseteq W^{m}L^{p,q}(\R^d)$ is a closed subspace for an arbitrary open set $\Omega\subseteq\R^d$, with the same Sobolev norms. Therefore, $$b_n\big(W^m_0L^{p,q}(\Omega)\hookrightarrow L^{p^*,r}(\Omega)\big)\le b_n\big(W^m_0L^{p,q}(\R^d)\hookrightarrow L^{p^*,r}(\R^d)\big)$$ and it suffices to consider the case $\Omega=\R^d$.

    Let $\delta>0$ be a small number. By Lemma~\ref{Lem::SobEmbed}, we have a factorization:
    \begin{gather}
        W^mL^{p,q}(\R^d)\hookrightarrow\Bs_{p+\delta,q}^{m-\frac{d\delta}{p(p+\delta)}}(\R^d)\xhookrightarrow{\iota}\Bs_{p^*-\delta,r}^{\frac{d\delta}{p^*(p^*-\delta)}}(\R^d)\hookrightarrow L^{p^*,r}(\R^d). 
    \end{gather}

    Using wavelet decomposition (see \cite[Proposition~2.13]{ChuahLangYaoBesov}), there exists an isomorphism $\Gamma:\Bs_{p^*-\delta,r}^{\frac{d\delta}{p^*(p^*-\delta)}}(\R^d)\xrightarrow{\simeq}\ell^r(\ell^{p^*-\delta})$ such that the restriction $\Gamma:\Bs_{p+\delta,q}^{m-\frac{d\delta}{p(p+\delta)}}(\R^d)\xrightarrow{\simeq}\ell^q(\ell^{p+\delta})$ is also an isomorphism. As a result by compositions:
    \begin{equation*}
        b_n\big(W^mL^{p,q}(\R^d)\hookrightarrow L^{p^*,r}(\R^d)\big)\lesssim_\delta b_n\big(\Bs_{p+\delta,q}^{m-\frac{d\delta}{p(p+\delta)}}(\R^d)\hookrightarrow\Bs_{p^*-\delta,r}^{\frac{d\delta}{p^*(p^*-\delta)}}(\R^d)\big)\lesssim_\delta b_n\big(\ell^q(\ell^{p+\delta})\hookrightarrow\ell^r(\ell^{p^*-\delta})\big).
    \end{equation*}
    
    Conversely, by \cite[Proposition~3.5]{ChuahLangYaoBesov} (see \eqref{Eqn::SeqEmbedBn}), we have
    \begin{gather*}
        b_n\big(\ell^q(\ell^{p+\delta})\hookrightarrow\ell^r(\ell^{p^*-\delta})\big)\le n^{-\min(\frac1{p+\delta},\frac1q)(1-\max(\frac qr,\frac{p+\delta}{p^*-\delta}))}.
    \end{gather*}

    Now, for $\eps>0$ in the assumption, taking $0<\delta<\eps$ such that $\frac{p+\delta}{p^*-\delta}<\frac p{p^*}+\eps$ we get \eqref{Eqn::MainBnEst} and complete the proof.
\end{proof}
Notice that when $p<q<r<p^*$, for $\delta > 0$ sufficiently small, $\min(\frac1{p+\delta},\frac1q)(1-\max(\frac qr,\frac{p+\delta}{p^*-\delta}))=\frac1q-\frac1r$. This exponent is sharp by the following:
\begin{prop}\label{Prop::SobSharp}
    $m\ge1$, $1\le p<d/m$, $0<q\le r\le\infty$ such that $q\le1$ if $p=1$. Let $p^*:=\frac{dp}{d-mp}$.
    
    Let $\Omega\subseteq\R^d$ be an open subset. Then there is a $c>0$ such that $$b_n(W_0^mL^{p,q}(\Omega)\hookrightarrow L^{p^*,r}(\Omega))\ge c\cdot n^{1/r-1/q},\qquad n\ge1.$$

    Moreover when $q=r$ the embedding $W_0^mL^{p,q}(\Omega)\hookrightarrow L^{p^*,r}(\Omega)$ is not strictly singular.
\end{prop}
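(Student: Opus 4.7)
\emph{Proof plan.} The strategy is to construct an explicit subspace of $W_0^m L^{p, q}(\Omega)$ consisting of dyadically rescaled disjoint bumps, on which the embedding is essentially a diagonal map between scalar sequence spaces. Fix a nontrivial $\phi \in C_c^\infty(B(0, 1/2))$ and an open ball $B(x_0, R)\subset\Omega$ with $R \le 1$, pick disjoint centers $z_j\in B(x_0, R/2)$ and radii $r_j := R\, 2^{-j-1}$ so that the balls $B(z_j, r_j)$ are pairwise disjoint in $B(x_0, R)$, and set
\[
\psi_j(x) := r_j^{-d/p^*}\, \phi\!\left(\tfrac{x - z_j}{r_j}\right) \in C_c^\infty(\Omega), \qquad j\in\Z_+.
\]
The critical identity $m - d/p = -d/p^*$ together with the Lorentz scaling $\|g(\cdot/r)\|_{L^{p, q}} = r^{d/p}\|g\|_{L^{p, q}}$ gives $\|\psi_j\|_{L^{p^*, r}} = \|\phi\|_{L^{p^*, r}}$ and $\|\partial^\alpha \psi_j\|_{L^{p, q}} = r_j^{m - |\alpha|}\|\partial^\alpha \phi\|_{L^{p, q}}$, so $\|\psi_j\|_{W^m L^{p, q}}\asymp 1$ uniformly in $j$.

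The heart of the proof is the pair of sequence-space equivalences, for every finitely supported $(c_j)$,
\[
\Bigl\|\textstyle\sum_{j} c_j \psi_j\Bigr\|_{L^{p^*, r}(\Omega)} \asymp \|(c_j)\|_{\ell^r}, \qquad
\Bigl\|\textstyle\sum_{j} c_j \psi_j\Bigr\|_{W^m L^{p, q}(\Omega)} \asymp \|(c_j)\|_{\ell^q}.
\]
Each $|c_j \psi_j|$ is essentially a bump of height $H_j\asymp|c_j|\,r_j^{-d/p^*}$ on a set of measure $V_j\asymp r_j^d$, with $H_j V_j^{1/p^*}\asymp|c_j|$. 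Using $\mu_f(t) = \sum_j V_j\,\1_{H_j > t}$ and the subadditive inequality $(\sum a_j)^{r/p^*}\le\sum a_j^{r/p^*}$ (valid since $r<p^*$) yields the upper bound $\|f\|_{L^{p^*, r}}^r\lesssim\sum_j H_j^r V_j^{r/p^*} \asymp \sum_j |c_j|^r$. The matching lower bound exploits the geometric scale separation: sorting the heights and evaluating $\int_0^\infty t^{r-1}\mu_f(t)^{r/p^*}\,dt$ level-by-level, each level contributes at least its own summand $H_j^r V_j^{r/p^*}$ without cancellation from the others, so one recovers $\sum_j H_j^r V_j^{r/p^*}\asymp\sum_j|c_j|^r$. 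The identical analysis applied to the top-order derivatives $\partial^\alpha \psi_j$ with $|\alpha|=m$ (again disjoint dyadic-scale bumps with $\|\partial^\alpha\psi_j\|_{L^{p,q}}$ uniform in $j$, and $\|\partial^\alpha(c_j\psi_j)\|_{L^{p,q}}\asymp|c_j|$) yields the $\ell^q$-equivalence for the Sobolev-Lorentz norm; the lower-order derivatives come with factors $r_j^{m-|\alpha|}\le 1$ and are absorbed into constants.

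Granted these equivalences, the $n$-dimensional subspace $V_n := \Span\{\psi_1, \dots, \psi_n\} \subset W_0^m L^{p, q}(\Omega)$ satisfies
\[
\inf_{0\neq f\in V_n}\frac{\|f\|_{L^{p^*, r}}}{\|f\|_{W^m L^{p, q}}} \gtrsim \inf_{0\neq (c_j)\in\R^n}\frac{\|(c_j)\|_{\ell^r}}{\|(c_j)\|_{\ell^q}} = n^{1/r - 1/q},
\]
by H\"older's inequality (with equality at the constant vector), yielding the Bernstein lower bound. When $q = r$ the two equivalences collapse to $\|\cdot\|\asymp\|(c_j)\|_{\ell^q}$, so $V_\infty := \overline{\Span\{\psi_j : j \in \Z_+\}} \subset W_0^m L^{p, q}$ is an infinite-dimensional isomorphic copy of $\ell^q$ on which the embedding is an isomorphism onto its image; hence the embedding is not strictly singular.

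The principal technical obstacle is the sharp lower bound $\|\sum_j c_j \psi_j\|_{L^{p^*, r}} \gtrsim \|(c_j)\|_{\ell^r}$ and its derivative analog: bare disjointness of supports gives only the weaker Lorentz-sequence estimate $\|(c_j)\|_{\ell^{p^*, r}}$, producing the sub-optimal $b_n\gtrsim n^{-m/d}$; it is the dyadic separation of the scales $r_j$ that decouples the disjoint sum into the $\ell^r$ (resp.\ $\ell^q$) norm with the correct outer exponent.
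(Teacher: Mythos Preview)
Your overall strategy---dyadically rescaled disjoint bumps, reduction to the diagonal embedding $\ell^q\hookrightarrow\ell^r$---is exactly the paper's, and your $\psi_j$ match the paper's $u_j$ up to notation. The conclusion from the two sequence-space equivalences is also identical.

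The gap is in your justification of the \emph{upper} bounds $\bigl\|\sum_j c_j\psi_j\bigr\|_{L^{p^*,r}}\lesssim\|c\|_{\ell^r}$ and (via ``the identical analysis'') $\bigl\|\sum_j c_j\psi_j\bigr\|_{W^mL^{p,q}}\lesssim\|c\|_{\ell^q}$. You invoke the subadditivity $(\sum a_j)^{r/p^*}\le\sum a_j^{r/p^*}$, explicitly conditioned on $r<p^*$; but the proposition allows arbitrary $0<q\le r\le\infty$, so both $r>p^*$ and (on the Sobolev side) $q>p$ occur, and in those ranges the inequality reverses. For the Bernstein lower bound you actually need only the upper estimate on the \emph{source} side, so the live restriction is $q\le p$, which is not assumed; for the non-strict-singularity claim at $q=r$ you need all four inequalities. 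Your lower-bound sketch (``each level contributes at least its own summand'') is in the right spirit but also incomplete, since the heights $H_j=|c_j|\,r_j^{-d/p^*}$ need not be separated for arbitrary $(c_j)$.

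The paper handles all exponent ranges uniformly by isolating the equivalence as a lemma on $L^{p,q}(\R_+)$: for $f_\alpha=\sum_{j}\alpha_j\,2^{j/p}\1_{[2^{-j},2^{1-j})}$ one has $\|f_\alpha\|_{L^{p,q}}\asymp\|\alpha\|_{\ell^q}$ for every $0<q\le\infty$. The device that replaces your subadditivity step is a monotone majorant: set $\beta_j:=\sup_{k\le j}2^{(k-j)/p}|\alpha_k|$, so that $|f_\alpha|\le f_\beta$ and $f_\beta$ is decreasing, hence $f_\beta=f_\beta^*$ and $\|f_\beta\|_{L^{p,q}}$ is computed \emph{exactly} as $\asymp\|\beta\|_{\ell^q}$; meanwhile $\|\beta\|_{\ell^q}\asymp\|\alpha\|_{\ell^q}$ because $\beta$ is dominated by a geometric convolution of $|\alpha|$. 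The matching lower bound comes from $f_{|\alpha|}^*\ge 2^{-1/p}f_{\tau_1\beta}$. This bypasses any comparison of $q$ with $p$ (or $r$ with $p^*$) and is what your argument is missing in the super-diagonal regime.
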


The key is to obtain a subspace of $L^{p,q}(\Omega)$ which is isomorphic to $\ell^q$.

\begin{lem}\label{Lem::LorentzSum}
    Let $0<p<\infty$ and $0<q\le\infty$. There is a $C=C_{p,q}>0$ such that for every bounded sequence $\alpha=(\alpha_j)_{j\in\Z}$,
    \begin{equation}\label{Eqn::LorentzSum::lqSubspace}
        C^{-1}\|\alpha\|_{\ell^q(\Z)}\le\Big\|\sum_{j\in\Z}\alpha_j2^\frac jp\1_{[2^{-j},2^{1-j})}\Big\|_{L^{p,q}(\R_+)}\le C\|\alpha\|_{\ell^q(\Z)}.
    \end{equation}
\end{lem}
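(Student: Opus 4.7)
The plan is to bypass explicit use of the decreasing rearrangement and work directly with the distribution function $\mu_f(t):=|\{|f|>t\}|$, exploiting the specific dyadic structure $|I_j|=2^{-j}$. After replacing $\alpha_j$ with $|\alpha_j|$ and reducing to sequences of finite support (the general case follows by monotone approximation, since the Lorentz quasinorm has the Fatou property), the disjointness of the intervals $I_j=[2^{-j},2^{1-j})$ gives, with $\beta_j:=\alpha_j 2^{j/p}$,
\[
\mu_f(t)=\sum_{j\in S_t}2^{-j},\qquad S_t:=\{j\in\Z:\beta_j>t\},\ t>0.
\]

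The crux is that for any subset $S\subseteq\Z$ with $j^*:=\min S$, geometric summation gives $2^{-rj^*}\leq\sum_{j\in S}2^{-rj}\leq (1-2^{-r})^{-1}\cdot 2^{-rj^*}$ for every $r>0$. Applied with $r=1$ and $r=q/p$ to $S=S_t$, this makes both $\mu_f(t)$ and $\sum_{j\in S_t}2^{-jq/p}$ comparable to single terms (namely $2^{-j_t^*}$ and $2^{-j_t^*q/p}$ respectively), and consequently $\mu_f(t)^{q/p}\approx_{p,q}\sum_{j\in S_t}2^{-jq/p}$ pointwise in $t$, with explicit constants $(1-2^{-q/p})$ and $2^{q/p}$. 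Substituting this into $\|f\|_{L^{p,q}}^q=p\int_0^\infty t^{q-1}\mu_f(t)^{q/p}\,dt$ and interchanging the sum with the integral by Tonelli,
\[
\|f\|_{L^{p,q}}^q\approx p\sum_j 2^{-jq/p}\int_0^{\beta_j}t^{q-1}\,dt=\frac{p}{q}\sum_j 2^{-jq/p}\beta_j^q=\frac{p}{q}\|\alpha\|_{\ell^q}^q,
\]
which is the desired equivalence. The case $q=\infty$ follows the same scheme with suprema in place of integrals: the inequality $t\mu_f(t)^{1/p}\leq 2^{1/p}\alpha_{j_t^*}\leq 2^{1/p}\|\alpha\|_{\ell^\infty}$ gives the upper bound, while taking $t\to\beta_j^-$ produces the matching lower bound $\|f\|_{L^{p,\infty}}\geq\|\alpha\|_{\ell^\infty}$.

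The main obstacle is really the geometric comparison step itself: this is the only place where the specific dyadic structure $|I_j|=2^{-j}$ enters essentially, through the fact that the $-j$ are distinct integers so that $\sum_{j\in S}2^{-rj}$ collapses to its largest summand. For arbitrary positive interval lengths, the sum $\sum_{j\in S_t}|I_j|$ need not be dominated by its largest term, so the pointwise comparison between $\mu_f(t)^{q/p}$ and $\sum_{j\in S_t}|I_j|^{q/p}$ would fail and the $\ell^q$-norm of $\alpha$ would no longer govern the $L^{p,q}$-norm of $f$.
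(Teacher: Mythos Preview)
Your proof is correct and takes a genuinely different route from the paper's. The paper introduces an auxiliary sequence $\beta_j:=\sup_{k\le j}2^{(k-j)/p}|\alpha_k|$ chosen so that $(2^{j/p}\beta_j)_j$ is increasing and hence $f_\beta$ is decreasing on $\R_+$; then $f_\beta=f_\beta^*$ and $\|f_\beta\|_{L^{p,q}}\approx\|\beta\|_{\ell^q}$ by a direct computation, while the loop is closed via a discrete convolution estimate $\|\beta\|_{\ell^q}\approx\|\alpha\|_{\ell^q}$ and a pointwise comparison $f_{|\alpha|}^*\ge 2^{-1/p}f_{\tau_1\beta}$. You bypass rearrangements entirely by working with the distribution function: the single geometric observation that $\sum_{j\in S}2^{-rj}\approx 2^{-r\min S}$ for every $r>0$ yields the pointwise equivalence $\mu_f(t)^{q/p}\approx\sum_{j\in S_t}2^{-jq/p}$, after which Tonelli collapses $\int t^{q-1}\mu_f(t)^{q/p}\,dt$ to $\tfrac1q\sum_j\alpha_j^q$. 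Your argument is shorter, gives explicit constants, and needs no auxiliary inequality beyond a geometric series; the paper's monotone-majorant device is a bit more conceptual and would adapt more readily to settings where a natural decreasing surrogate is already at hand.
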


\begin{proof}
    Fix a bounded sequence $\alpha:\Z\to\C$. We define $\beta_j:=\sup_{k\le j}2^{\frac{k-j}p}|\alpha_k|$ for $j\in\Z$. Therefore $|\alpha_j|\le \beta_j\le\sum_{k\in\Z}2^{-|j-k|/p}|\alpha_j|$ for every $j\in\Z$, which means
    \begin{equation}\label{Eqn::LorentzSum::ABSeq}
        \|\alpha\|_{\ell^q}\le\|\beta\|_{\ell^q}\le\|2^{-\frac1p|\cdot|}\ast|\alpha|\|_{\ell^q}\lesssim_q\||\alpha|\|_{\ell^q}=\|\alpha\|_{\ell^q}.
    \end{equation}
    Here we use the equality $\|2^{-\delta|\cdot|}\ast\alpha\|_{\ell^q}\lesssim_{\delta,q}\|\alpha\|_{\ell^q}$ for all $\delta>0$ and $0<q\le\infty$. Indeed, when $q\ge1$, this is just Young's inequality; when $q<1$, we have
    \begin{equation*}
        \textstyle \|2^{-\delta|\cdot|}\ast\alpha\|_{\ell^q}^q=\sum_{j\in\Z}\big|\sum_{k\in\Z}2^{-\delta|j-k|}\alpha_j\big|^q\le \sum_{j,k\in\Z}2^{-\delta q|j-k|}|\alpha_j|^q\lesssim_{\delta q}\sum_{j\in\Z}|\alpha_j|^q.
    \end{equation*}

    Now we define $f_\alpha:=\sum_{j\in\Z}\alpha_j2^{j/p}\1_{[2^{-j},2^{1-j})}$ on $\R_+$. Note that $|f_\alpha|=f_{|\alpha|}\le f_\beta$.
    
    Since $(2^{j/p}\beta_j)_{j\in\Z}$ is an increasing sequence we see that $f_\beta$ is an decreasing function on $\R_+$. Therefore $f_\beta=f_\beta^*$ equals its decreasing rearrangement. We get
    \begin{align*}
        \|f_\beta\|_{L^{p,q}(\R_+)}=\Big(\int_0^\infty t^{\frac qp-1}f_\beta^*(t)^qdt\Big)^{1/q}=\Big(\sum_{j\in\Z}\beta_j^q\int_{2^{-j}}^{2^{1-j}}t^{\frac qp-1}2^{j\frac qp}dt\Big)^{1/q}\approx_{p,q}\Big(\sum_{j\in\Z}\beta_j^q\Big)^{1/q}=\|\beta\|_{\ell^q}.
    \end{align*}
    
    Conversely, note that for functions $u_j:\R_+\to\R_+$ we have $(\sum_ju_j)^*(t)\ge\sup_ju_j^*(t)$. Taking $u_j=2^{j/p}|\alpha_j|\1_{[2^{-j},2^{1-j})}$ and using the convention $\tau_m\beta=(\beta_{j-m})_{j\in\Z}$ for $m\in\Z$, we get
    \begin{align*}
        f_{|\alpha|}^*(t)\ge&\sup_{j\in\Z}2^{\frac jp}|\alpha_j|\1_{(0,2^{-j})}(t)=\sum_{k\in\Z}\sup_{j\le k}2^{\frac jp}|\alpha_j|\1_{[2^{-k-1},2^{-k})}(t)
        \\
        =&\sum_{k\in\Z}2^{\frac kp}\beta_k\1_{[2^{-k-1},2^{-k})}(t)=2^{-\frac1p}\sum_{k\in\Z}2^{\frac kp}\beta_{k-1}\1_{[2^{-k},2^{1-k})}(t)=2^{-\frac1p}f_{\tau_1\beta}(t).
    \end{align*}
    Therefore $\|f_\alpha\|_{L^{p,q}(\R_+)}\ge2^{-1/p}\|f_\beta\|_{L^{p,q}}$, we conclude that $\|f_\alpha\|_{L^{p,q}}\approx_{p,q}\|f_\beta\|_{L^{p,q}}\approx\|\beta\|_{\ell^q}\approx\|\alpha\|_{\ell^q}$, finishing the proof.
\end{proof}

\begin{proof}[Proof of Proposition~\ref{Prop::SobSharp}]
    Without loss of generality, we can assume $\Omega$ is bounded. Therefore, by Sobolev's inequality, $\|\nabla^mf\|_{L^{p,q}}:=\sum_{|\alpha|=m}\|\partial^\alpha f\|_{L^{p,q}}$ is an equivalent norm for the space $W_0^mL^{p,q}(\Omega)$.

    Let $\rho>0$ and $x_0\in \Omega$ be such that $B(x_0,2\rho)\subset \Omega$. Therefore there is a sequence $(x_j)_{j=1}^\infty\subset B(x_0,2\rho)$ such that $\{B(x_j,2^{-j}\rho)\}_{j=0}^\infty$ are disjointed subsets of $\Omega$.
    
    Fix a nonzero function $\tilde u\in C_c^\infty(B(0,\rho))$, we define $u_j(x):=2^{jn/p^*}\tilde u(2^j(x-x_j))$ for $j\ge0$. Recall the scaling property $\|f(\lambda\cdot)\|_{L^{p,q}}=\lambda^{-n/p}\|f\|_{L^{p,q}}$ for every $\lambda>0$ and $f\in L^{p,q}(\R^n)$, therefore $\|\nabla^mu_j\|_{L^{p,q}}=\|\nabla^m\tilde u\|_{L^{p,q}}$ and $\|u_j\|_{L^{p^*,r}}=\|\tilde u\|_{L^{p^*,r}}$ for all $j\ge0$.

    Note that $(u_j)_{j=0}^\infty$ have disjoint supports. Applying Lemma~\ref{Lem::LorentzSum} with a suitable change of measures, we see that for every sequence $\alpha=(\alpha_j)_{j=0}^\infty:\Z_{\ge0}\to\C$,
    \begin{gather*}
        \Big\|\sum_{j=0}^\infty\alpha_j u_j\Big\|_{W^{m}L^{p,q}}\approx \Big\|\sum_{j=0}^\infty\alpha_j \cdot\nabla^mu_j\Big\|_{L^{p,q}}=\Big\|\sum_{j=0}^\infty\alpha_j \cdot|\nabla^mu_j|\Big\|_{L^{p,q}}\approx\|\alpha\|_{\ell^q} \|\nabla^m\tilde u\|_{L^{p,q}} ;
        \\
        \Big\|\sum_{j=0}^\infty\alpha_j u_j\Big\|_{L^{p^*,r}}=\Big\|\sum_{j=0}^\infty\alpha_j \cdot|u_j|\Big\|_{L^{p^*,r}}\approx\|\alpha\|_{\ell^r} \|\tilde u\|_{L^{p^*,r}}.
    \end{gather*}

    When $q<r$, for every $n$, we can take a subspace $V_n:=\Span(u_0,\dots,u_{n-1})$. The above estimates yield $\|f\|_{W^{m}L^{p,q}}\lesssim n^{1/q-1/r}\|f\|_{L^{p^*,r}}$ for all $f\in V_n$ with the implied constant independent of $n$. Thus, we get $b_n\gtrsim n^{1/r-1/q}$.

    When $q=r$, we can take the infinite dimensional subspace $V_\infty:=\Span(u_0,u_1,\dots)$. The above estimates yields $\|f\|_{W^{m}L^{p,q}}\approx\|f\|_{L^{p^*,r}}$ for all $f\in V_\infty$, proving the non-strictly singularity. 
\end{proof}

\begin{ack}
    The authors thank Armin Schikorra for the discussion on $W^{1,1}$ estimates.
\end{ack}

\bibliographystyle{amsalpha} 
\bibliography{reference}

{
  \bigskip
  \footnotesize

  Chian Yeong Chuah$^\ast$: \texttt{chuah.21@osu.edu}

  Jan Lang$^{\ast,\dagger}$: \texttt{lang@math.osu.edu}

  Liding Yao$^\ast$: \texttt{yao.1015@osu.edu}

  \medskip

  $\ast$: \textsc{Department of Mathematics, The Ohio State University, Columbus, OH, United States}\par\nopagebreak
  $\dagger$: \textsc{Department of Mathematics, Faculty of Electrical Engineering, Czech Technical University in Prague, Czech Republic}
}
\end{document}